 \newtheorem{theorem}{Theorem}[section]
 \newtheorem{thm}[theorem]{Theorem}
 \newtheorem{prop}[theorem]{Proposition}
\def\@author#1{\g@addto@macro\elsauthors{\normalsize%
    \def\baselinestretch{1}%
    \upshape\authorsep#1\unskip\textsuperscript{%
      \ifx\@fnmark\@empty\else\unskip\sep\@fnmark\let\sep=,\fi
      \ifx\@corref\@empty\else\unskip\sep\@corref\let\sep=,\fi
      }%
    \def\authorsep{\unskip,\space}%
    \global\let\@fnmark\@empty
    \global\let\@corref\@empty  
    \global\let\sep\@empty}%
    \@eadauthor={#1}
}
\begin{document}

\journal{Discrete Mathematics}
\title{A note on  perfect quantum state transfers on trees}

\author{Bahman Ahmadi\corref{cor1}}
\ead{bahman.ahmadi@shirazu.ac.ir}
 \cortext[cor1]{Corresponding author}
 \author{Ahmad Mokhtar}
\ead{ahmad99mokhtar@gmail.com}
\address{Department of Mathematics, Shiraz University, Shiraz, Iran}

 \begin{abstract}
It has been asked in \cite{godsil2012state} whether there are trees other than $P_2$ and $P_3$ which can admit perfect state transfers. In this note we show that the answer is negative. 
 \end{abstract}

 \begin{keyword}
 perfect state transfer, tree, distance partition
 \end{keyword}

\maketitle

\section{\bf Introduction}\label{introduction}
For any simple graph $X$ with the adjacency matrix $A$ and with $|V(X)|=n$, we define the function $H_X(t)$ as
\[H_X(t)=\exp{(iAt)},\quad \text{for any } t.\]
If $X$ is clear from the context, we may just write $H(t)$. We say there is a \textit{perfect state transfer} or a PST between distinct vertices $u$ and $v$ of $X$ at time $\tau$, if  $|H(\tau)_{u,v}|=1$. For the motivation of this definition in designing quantum communication networks and a survey of important results, the reader may refer to \cite{christandl2004perfect,godsil2010can, godsil2012state}.

Godsil provides a proof in \cite{godsil2012state} that there is a PST between the endpoints of the paths $P_2$ and $P_3$. Also, the following has been proved in  \cite{christandl2004perfect}.

\begin{prop}\label{P_n}
The path $P_n$ has no PST for any $n\geq 4$. \qed
\end{prop}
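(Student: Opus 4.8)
The plan is to combine the explicit spectral decomposition of $P_n$ with the two standard necessary conditions for perfect state transfer, and then to reduce the whole question to a degree computation in a cyclotomic field followed by the verification of two small cases. Write $m=n+1$. The path $P_n$ has simple eigenvalues $\lambda_k=2\cos(k\pi/m)$ for $k=1,\dots,n$, with orthonormal eigenvectors $v_k$ whose $j$-th entry is proportional to $\sin(jk\pi/m)$. Since the eigenvalues are distinct, each spectral idempotent is $E_k=v_kv_k^{\top}$, so the support of a vertex $u$ is $\{k:\sin(uk\pi/m)\neq 0\}$. The elementary but decisive observation is that $k=1$ lies in this support for \emph{every} vertex, because $\sin(u\pi/m)\neq 0$ whenever $1\le u\le m-1$; hence the eigenvalue $2\cos(\pi/m)$ belongs to the support of every vertex of $P_n$.

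Next I would invoke the two necessary conditions. If there is PST between $u$ and $v$, then $u$ and $v$ are strongly cospectral, i.e.\ $E_ke_u=\pm E_ke_v$ for every $k$; reading this off at $k=1$ gives $\sin(u\pi/m)=\pm\sin(v\pi/m)$, and since both sines are positive this forces $v=m-u=n+1-u$. Thus PST can only occur between a vertex and its mirror image under the reflection of the path, and in particular the central vertex (when $m$ is even) is excluded, its mirror image being itself. For such a mirror pair one has $v_k(v)=(-1)^{k+1}v_k(u)$, so $H(\tau)_{u,v}=\sum_k e^{i\lambda_k\tau}(-1)^{k+1}v_k(u)^2$; combining $|H(\tau)_{u,v}|=1$ with $\sum_k v_k(u)^2=1$ forces $e^{i\lambda_k\tau}(-1)^{k+1}$ to be constant over the support of $u$. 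Consequently every ratio $(\lambda_r-\lambda_s)/(\lambda_p-\lambda_q)$ with indices in the support is rational; equivalently, $u$ is periodic.

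Now comes the arithmetic heart. Since $2\cos(\pi/m)=\zeta+\zeta^{-1}$ for a primitive $2m$-th root of unity $\zeta$, it generates the maximal real subfield of $\mathbb{Q}(\zeta)$ and so has degree $\varphi(2m)/2$ over $\mathbb{Q}$. By Godsil's characterization of periodic vertices \cite{godsil2012state}, periodicity forces every support eigenvalue, in particular $2\cos(\pi/m)$, to be an algebraic integer of degree at most $2$; hence $\varphi(2m)\le 4$, which for $m\ge 5$ leaves only $m=5$ and $m=6$, i.e.\ $n=4$ and $n=5$. (One can avoid quoting the periodicity theorem by feeding the Chebyshev relations $\lambda_2=\lambda_1^2-2$ and $\lambda_3=\lambda_1^3-3\lambda_1$ into the rational-ratio condition to force $2\cos(\pi/m)$ to satisfy a fixed low-degree polynomial; the bookkeeping is only slightly heavier because the supports of interior vertices omit the multiples of $m/\gcd(u,m)$ and so need not contain three consecutive indices.)

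Finally I would dispatch $n=4$ and $n=5$ by hand. For $m=5$ every vertex has full support, the support eigenvalues are $\tfrac{\pm1\pm\sqrt5}{2}$, and the differences $\lambda_1-\lambda_2=1$ and $\lambda_1-\lambda_3=\sqrt5$ have irrational ratio, so the rational-ratio condition fails and no vertex is periodic. For $m=6$ the admissible mirror pairs are $\{1,5\}$ and $\{2,4\}$ (the centre $3$ being excluded by the reduction above), and in each case the relevant support contains $\sqrt3$ together with an integer, again producing an irrational difference ratio. In every case periodicity is violated, so no PST exists. The step I expect to be the main obstacle is the passage in the third paragraph from ``all difference ratios are rational'' to an actual degree bound: Godsil's periodicity theorem makes it immediate, but a fully self-contained argument must control the gapped supports of the interior vertices and must handle the borderline cases $n=4,5$ separately, since there the eigenvalues are genuine quadratic integers that survive every degree test, the contradiction coming only from the finer rational-ratio computation.
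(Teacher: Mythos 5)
The paper offers no proof of Proposition~\ref{P_n} to compare against: the statement is imported verbatim from Christandl et al.\ \cite{christandl2004perfect} with the proof omitted (the \qed\ marks a cited result), and the paper's own contribution, Theorem~\ref{main}, merely invokes it. Measured against the literature, your argument is correct and complete, and it is the modern Godsil-style route rather than the original computation: I checked the sign identity $v_k(m-u)=(-1)^{k+1}v_k(u)$, the deduction $v=m-u$ from strong cospectrality at $k=1$ (both sines positive, so no sign ambiguity survives), the phase-alignment step giving rational ratios of eigenvalue differences, the degree count $[\mathbb{Q}(2\cos(\pi/m)):\mathbb{Q}]=\varphi(2m)/2\le 2$ whose only solutions with $m\ge 5$ are $m=5,6$, and the finishing ratio checks ($\sqrt5=(\lambda_1-\lambda_3)/(\lambda_1-\lambda_2)$ for $m=5$; $(\sqrt3-1)/2$ from the support $\{\sqrt3,1,-1,-\sqrt3\}$ of the pair $\{2,4\}$, and similarly for $\{1,5\}$, when $m=6$), all of which are right. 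Christandl et al.\ instead argue directly that perfect transfer forces the eigenvalue gaps of the mirror-symmetric chain to be commensurable and that the gaps $2\cos(k\pi/m)$ fail this for $m\ge 5$; your detour through the cyclotomic degree and Godsil's periodicity characterization (legitimately quotable from \cite{godsil2012state}) buys a clean reduction to two finite cases, at the cost of heavier machinery, and your parenthetical correctly identifies that the machinery can be replaced by Chebyshev bookkeeping if self-containment is wanted. One pleasant parallel worth noting: your first reduction, that PST on a path can only join a vertex to its mirror image, is exactly what the paper's Proposition~\ref{partition} ($\Delta_u=\Delta_v$) yields when specialized to paths, so your spectral derivation of it recovers the combinatorial symmetry the paper exploits for general trees.
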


Therefore, Godsil asks in \cite{godsil2012state}  whether there are any trees besides $P_2$ and $P_3$ on which a PST can occur. We prove that the answer is no. The main tool to do this is the following result also from \cite{godsil2012state}. Given any vertex $u$ from a graph $X$, we denote by $\Delta_u$ the distance partition of the vertices of $X$ with respect to $u$.

\begin{prop}\label{partition}
Let $u$ and $v$ be vertices in $X$. If there is perfect state transfer from $u$ to $v$, then $\Delta_u = \Delta_v$.\qed
\end{prop}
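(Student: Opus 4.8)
The plan is to convert the analytic hypothesis of perfect state transfer into a single real algebraic object --- an orthogonal involution that lies in the algebra generated by $A$ and swaps the two vertices --- and then to read off the equality of distance partitions from its combinatorial action. First I would record the standard normalization. Since $A$ is real symmetric, $H(\tau)=\exp(iA\tau)$ is both (complex) symmetric and unitary, so its columns are orthonormal; the hypothesis $|H(\tau)_{u,v}|=1$ then forces every other entry of the $u$-th column to vanish, i.e. $H(\tau)e_u=\gamma e_v$ with $\gamma=H(\tau)_{u,v}$ and $|\gamma|=1$, and by symmetry $H(\tau)e_v=\gamma e_u$ as well.

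Next I would pass to the spectral decomposition $A=\sum_\theta\theta E_\theta$ and apply the idempotent $E_\mu$ to $H(\tau)e_u=\gamma e_v$, obtaining $e^{i\mu\tau}E_\mu e_u=\gamma E_\mu e_v$. Hence $E_\mu e_u$ and $E_\mu e_v$ vanish together (so $u$ and $v$ share the same eigenvalue support), and since $E_\mu e_u$ and $E_\mu e_v$ are real, the scalar $\gamma e^{-i\mu\tau}$ relating them must be real of unit modulus, i.e. $\pm1$; equivalently $e^{i\mu\tau}=\pm\gamma$ on the support. Writing $\Theta_+=\{\mu: e^{i\mu\tau}=\gamma\}$ and setting
\[ R=2\sum_{\mu\in\Theta_+}E_\mu-I, \]
a short computation shows $R$ is a real symmetric orthogonal involution, that it commutes with $A$, and that $Re_u=e_v$ and $Re_v=e_u$. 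This reduces the proposition to a purely combinatorial statement: if a real orthogonal matrix $R$ with $R^2=I$ commutes with $A$ and satisfies $Re_u=e_v$, then $\Delta_u=\Delta_v$.

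For the combinatorial step the lever is that $R$ intertwines walks from $u$ with walks from $v$: from $RA^r=A^rR$ and $Re_u=e_v$ one gets $RA^re_u=A^re_v$, and entrywise
\[ (A^r)_{v,w}=\sum_x (A^r)_{u,x}\,R_{x,w}, \]
together with the symmetric identity obtained by exchanging $u$ and $v$. Since $d(u,w)$ is the least $r$ with $(A^r)_{u,w}\neq0$, I would use these identities to tie the least nonzero power on the left to distances measured from $u$, comparing the nested balls about $u$ and about $v$ and inducting on the radius, with the goal of showing that the block of $w$ in $\Delta_u$ and its block in $\Delta_v$ determine each other; feeding this back through the $u\leftrightarrow v$ symmetry then forces the two families of cells to coincide as sets, which is exactly $\Delta_u=\Delta_v$.

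I expect this last step to be the main obstacle. The difficulty is that $R$ need not be a permutation matrix --- perfect state transfer need not arise from a graph automorphism --- so one cannot simply produce a distance-preserving bijection of vertices and read off the partition. The matching of distance classes must instead be extracted at the level of subspaces and dimensions, and the real work is to rule out cancellation in the signed sums $\sum_x (A^r)_{u,x}R_{x,w}$ precisely when $r$ is minimal, so that the vanishing of the right-hand side genuinely reflects a distance and not an accidental cancellation. This is where the orthogonality and involutivity of $R$, and not merely the fact that it commutes with $A$, have to be brought to bear.
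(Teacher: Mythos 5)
Your first half is correct, and it is the standard machinery: from unitarity and complex symmetry of $H(\tau)$ you correctly get $H(\tau)e_u=\gamma e_v$ and $H(\tau)e_v=\gamma e_u$, the spectral argument $e^{i\mu\tau}E_\mu e_u=\gamma E_\mu e_v$ correctly yields equal eigenvalue supports and real signs $\gamma^{-1}e^{i\mu\tau}=\pm1$ on the support, and your $R=2\sum_{\mu\in\Theta_+}E_\mu-I$ is indeed a real symmetric orthogonal involution commuting with $A$ that swaps $e_u$ and $e_v$. (Note the paper itself offers no proof of this proposition --- it is quoted from \cite{godsil2012state} --- so the benchmark is Godsil's argument, which develops exactly these spectral facts.)

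The fatal problem is the sentence ``this reduces the proposition to a purely combinatorial statement: if a real orthogonal matrix $R$ with $R^2=I$ commutes with $A$ and satisfies $Re_u=e_v$, then $\Delta_u=\Delta_v$.'' That statement is false. Take $X=K_{1,3}$ with centre $c$ and leaves $u,v,w$, and let $R$ be the permutation matrix of the automorphism exchanging $u$ and $v$ and fixing $c,w$: it is real, symmetric, orthogonal, involutive, commutes with $A$, and satisfies $Re_u=e_v$ and $Re_v=e_u$; yet $\Delta_u=\bigl\{\{u\},\{c\},\{v,w\}\bigr\}\neq\bigl\{\{v\},\{c\},\{u,w\}\bigr\}=\Delta_v$. (More generally, any involutive automorphism swapping two vertices supplies such an $R$, and swapped vertices plainly need not have equal distance partitions.) So the cancellation in $\sum_x(A^r)_{u,x}R_{x,w}$ that you flag as the ``main obstacle'' cannot be ruled out from your listed hypotheses --- it genuinely occurs --- and no amount of work on the combinatorial step can close the argument. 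The property you discarded in the reduction is that \emph{your} $R$ is a polynomial in $A$, i.e. it acts as the scalar $\pm1$ on each eigenspace (equivalently $E_\mu e_v=\pm E_\mu e_u$, strong cospectrality); the transposition above is not scalar on the two-dimensional $0$-eigenspace of $K_{1,3}$, which is exactly how it evades the conclusion. Even after restoring that property you would still owe a proof: your final step is only a plan with an admitted gap, and the known arguments for this proposition do not run on the signs alone but exploit the coupling $e^{i\mu\tau}=\pm\gamma$ tying the sign at $\mu$ to the eigenvalue $\mu$ and the time $\tau$ --- information your reduction erases entirely. As it stands, the proposal establishes the standard strong-cospectrality preliminaries but does not prove the proposition.
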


\section{There is no PST on trees}\label{trees}
In this section we prove the main result of the note.

\begin{thm}\label{main}
If $T\neq P_2, P_3$ is a tree, then there is  no PST on $T$.
\end{thm}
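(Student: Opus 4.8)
The plan is to prove the contrapositive structure: assume $T$ is a tree admitting PST between vertices $u$ and $v$, and derive that $T$ must be $P_2$ or $P_3$. The central tool is Proposition~\ref{partition}, which forces the distance partitions $\Delta_u$ and $\Delta_v$ to coincide. My strategy is to extract strong structural consequences from the equality $\Delta_u = \Delta_v$ when the underlying graph is a tree, and show these constraints are so rigid that $T$ collapses to a path of length at most two.

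Let me think about what $\Delta_u = \Delta_v$ means on a tree. In any tree there is a unique path between $u$ and $v$, say of length $d = \mathrm{dist}(u,v)$. First I would argue that PST implies $u$ and $v$ are \emph{cospectral} (in fact strongly cospectral), so they play symmetric roles; in particular the tree admits an automorphism-like symmetry swapping the distance layers around $u$ and $v$. The key observation is that the distance partition from a vertex $w$ in a tree records, layer by layer, how the tree branches away from $w$. The condition that the cell sizes (and more) agree for both $u$ and $v$ should force the unique $u$--$v$ path to be a symmetry axis, and should force every vertex to lie on this path. Concretely, I would show that if $T\neq P_2,P_3$ then one can find a vertex $u$ whose distance partition cannot be matched by any other vertex $v$, contradicting Proposition~\ref{partition}.

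The cleanest route, I expect, is a case analysis driven by $\mathrm{diam}(T)$ and leaf structure. If $T$ has a vertex of degree at least $3$, then some distance layer from $u$ contains a branch point, and I would compare the multiset of layer sizes from $u$ against those from any candidate $v$; the branching creates an asymmetry that no $v$ can replicate unless extra vertices exist, which then push the diameter up and reintroduce the same obstruction. If $T$ has maximum degree $\le 2$, then $T$ is a path $P_n$, and Proposition~\ref{P_n} immediately rules out $n\ge 4$, leaving only $P_2$ and $P_3$. Thus the whole problem reduces to eliminating trees with a branch vertex, i.e. showing no such tree can have two vertices with equal distance partitions that are also compatible with PST.

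The hard part will be turning "equal distance partitions on a tree" into a contradiction with branching, because equality of cell \emph{sizes} alone is a fairly weak combinatorial condition and many non-isomorphic trees can share distance-partition cardinality sequences. I anticipate needing more than cardinalities: I would invoke that PST makes $u$ and $v$ strongly cospectral, so the quotient (equitable-like) structure induced by $\Delta_u$ and $\Delta_v$ must agree as weighted/quotient matrices, not merely in sizes. On a tree, such a quotient is essentially a weighted path, and matching the quotients from both ends forces a palindromic (symmetric) layer structure with each internal layer being a single vertex — which is exactly a path. The main obstacle, then, is cleanly justifying that the refined distance-partition data on a tree is equitable enough to behave like a path quotient, and ruling out the exceptional small cases by hand. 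Once that rigidity is established, $T$ being a path combined with Proposition~\ref{P_n} finishes the argument.
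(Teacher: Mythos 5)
Your overall skeleton matches the paper's (assume a PST between $u$ and $v$, apply Proposition~\ref{partition}, reduce $T$ to a path, finish with Proposition~\ref{P_n}), but the decisive step --- deriving ``$T$ is a path'' from $\Delta_u=\Delta_v$ --- is left unexecuted, and the reason you stall is a misreading of the hypothesis. You treat $\Delta_u=\Delta_v$ as though it only supplied matching cell \emph{sizes} (``equality of cell sizes alone is a fairly weak combinatorial condition'') and then try to compensate by importing strong cospectrality and an equitable-quotient rigidity argument that you never carry out (``should force'', ``I anticipate needing'', ``the main obstacle''). But Proposition~\ref{partition} asserts equality of the two partitions \emph{as partitions of} $V(T)$: two vertices equidistant from $u$ must be equidistant from $v$, and conversely. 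With that full strength, the reduction is an elementary two-step check requiring no spectral input. Let $u=w_0-w_1-\cdots-w_r-w_{r+1}=v$ be the unique $u$--$v$ path. If $u$ had a neighbour $z\neq w_1$, then $z$ and $w_1$ lie in the same cell of $\Delta_u$ (both at distance $1$ from $u$), yet $d(v,w_1)=r$ while $d(v,z)=r+2$, since in a tree the $v$--$z$ geodesic passes through $u$; so $\Delta_u\neq\Delta_v$, a contradiction, and $u$ (symmetrically $v$) is a leaf. Likewise, if some internal $w_i$ had a neighbour $z\notin\{w_{i-1},w_{i+1}\}$, then $d(v,w_{i-1})=d(v,z)=r+2-i$, while $d(u,w_{i-1})=i-1\neq i+1=d(u,z)$, again contradicting $\Delta_u=\Delta_v$. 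Hence $T$ is a path and Proposition~\ref{P_n} finishes the argument.

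Beyond being unnecessary, your fallback machinery is itself shaky: distance partitions of a tree from an arbitrary vertex are in general \emph{not} equitable (vertices in the same layer can have different numbers of neighbours in the next layer), so the claim that ``the quotient is essentially a weighted path'' whose matching from both ends forces singleton internal layers does not stand as stated --- you flag this yourself as the main obstacle and never resolve it. Your handling of the maximum-degree-$\le 2$ case (then $T=P_n$, and Proposition~\ref{P_n} leaves only $P_2$ and $P_3$) is correct and agrees with the paper, but the branch-vertex case carries all the weight of the theorem, and there your proposal has a genuine gap precisely where the paper's proof is a direct two-line application of partition equality.
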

\begin{proof}
Suppose that there is a PST on $T$ between two distinct vertices $u$ and $v$. Assume $P: u=w_0-w_1-w_2-\cdots- w_r - w_{r+1}=v$ is the unique path between $u$ and $v$. First we show that both $u$ and $v$ must be leaves. If $u$ is adjacent to a vertex $z\neq w_1$, then $w_1$ and $z$ belong to the same cell of the distance partition $\Delta_u$, while they are in distinct cells of the partition $\Delta_v$. Therefore $\Delta_u\neq \Delta_v$ which, according to Proposition~\ref{partition}, is a contradiction. Hence $u$ is a leaf and with the same argument, $v$ is a leaf as well. Then we show that indeed $T=P$. To do this, suppose (for contrary)  that there is an $i\in \{1,\ldots,r\}$ such that $w_i$ has a neighbour $z$ other than $w_{i-1}$ and $w_{i+1}$. Then $w_{i-1}$ and $z$ belong to the same cell of  $\Delta_v$ while they belong to distinct cells of $\Delta_u$. This, similarly, is a contradiction. Thus the claim is proved; that is, $T$ is a path and since $T\neq P_2, P_3$, according to Proposition~\ref{P_n}, $T$ cannot have a PST.
\end{proof}

\section*{Acknowledgment}
The first author would like to thank the financial support the Iranian National Elites' Foundation.

\end{document}